\documentclass[12pt]{amsart}
\usepackage{amsmath}
\usepackage{amssymb}
\usepackage{amsthm}
\usepackage{array}
\usepackage{xy}
\usepackage[pdftex]{graphicx}
\usepackage{hyperref}
\usepackage{color}
\usepackage{transparent}
\usepackage{latexsym}

\setlength{\voffset}{0 cm} \setlength{\oddsidemargin}{0.5cm} \setlength{\evensidemargin}{0.5cm}
 \setlength{\textwidth}{16.0cm} \setlength{\textheight}{20.5cm}

\numberwithin{equation}{section}

\newcommand{\N}{\mathbb{N}}
\newcommand{\R}{\mathbb{R}}

\sloppy
\usepackage{a4wide,color,eucal,enumerate,mathrsfs}
\usepackage[normalem]{ulem}
\usepackage{amsmath,amssymb,amsfonts,amsthm,epsfig,graphicx}
\usepackage{esint}
\numberwithin{equation}{section}

\usepackage[latin1]{inputenc}





\newtheorem{theorem}{Theorem}[section]

\newtheorem{corollary}[theorem]{Corollary}
\newtheorem{lemma}[theorem]{Lemma}

\newtheorem{remark}[theorem]{Remark}

 


\newcommand{\ep}{\varepsilon}

\begin{document}
\title[Obstructions to regularity in the classical Monge problem]{Obstructions to regularity in the classical Monge problem}
\author[Maria Colombo]{Maria Colombo}
\author[Emanuel Indrei]{Emanuel Indrei}

\date{}

\maketitle

\makeatletter
\def\blfootnote{\xdef\@thefnmark{}\@footnotetext}
\makeatother

\blfootnote{This work was supported by NSF grant DMS-0932078, administered 
by the Mathematical Sciences Research Institute in Berkeley, California, while the authors were in residence at MSRI during the 2013 program ``Optimal Transport: Geometry and Dynamics."
The first author was also supported by the {\it Gruppo Nazionale per l'Analisi Matematica, la Probabilit\`a e le loro Applicazioni (GNAMPA)} of the Italian {\it  Istituto Nazionale di Alta Matematica (INdAM)} and by the {\it PRIN 2011 Calcolo delle variazioni} of the italian {\it Ministero dell'istruzione dell'Universit\`a e della Ricerca}.}

\def\signei{\bigskip\begin{center} {\sc Maria Colombo\par\vspace{3mm}
Scuola Normale Superiore\\
p.za dei Cavalieri 7, I-56126\\
Pisa, Italy\\
\
email:} {\tt maria.colombo@sns.it}
\end{center}}

\def\signdm{\bigskip\begin{center} {\sc Emanuel Indrei\par\vspace{3mm}
MSRI\\  
17 Gauss Way\\
Berkeley, CA 94720\\
email:} {\tt eindrei@msri.org }
\end{center}}

\begin{abstract}
We provide counterexamples to regularity of optimal maps in the classical Monge problem under various assumptions on the initial data. Our construction is based on a variant of the counterexample in \cite{LSW} to Lipschitz regularity of the monotone optimal map between smooth densities supported on convex domains. 
\end{abstract}

\section{Introduction}
The classical optimal transportation problem appears in a $1781$ article of Gaspard Monge \cite{GM}. In the modern formulation, one is given a cost function $c: \R^n \times \R^n \rightarrow [0,\infty]$ and a pair of probability densities $f$ and $g$ defined on two domains $\Omega \subset \R^n$, $\Omega^* \subset \R^n$, respectively. The objective is to minimize the functional 
\begin{equation} \label{min}
T \rightarrow \int_\Omega c(x,T(x)) f(x) \,dx
\end{equation}
among maps $T: \Omega \rightarrow \Omega^*$ which satisfy the ``push-forward" condition $T_{\#}f=g$, i.e. $$\int_A g(y)dy=\int_{T^{-1}(A)} f(x)dx  \hskip .1in \text{for all Borel sets $A \subset \Omega^*$.}$$      
A minimizer of \eqref{min} is commonly referred to as an ``optimal map" or ``optimal transport" for the corresponding cost function. In the classical problem, one selects the Monge cost, i.e. $c(x,y)=|x-y|$. The existence of optimal maps was addressed by many authors \cite{TW, CFM, EG, Sud, Am}. In general, minimizers are not unique; nevertheless, there is a unique optimal transport which is monotone on transfer rays \cite{FM}. The regularity of optimal maps is still widely open; the only known result is in $\R^2$: if the given densities are positive, continuous, and have compact, disjoint, convex support, then the monotone optimal map is continuous in the interior of the transfer rays \cite{FGP}. Recently, it was shown in \cite{LSW} that the monotone optimal transport between positive $C^\infty$ densities supported on the same bounded, convex domain fails to be Lipschitz continuous at interior points. In fact, the authors prove something stronger: namely, they construct an example in which the monotone optimal map does not belong to $C^{\frac{2}{3}+\ep}$ for any $\ep>0$.

In this paper we focus on examples relating the regularity of the initial data, which are assumed to be strictly positive and bounded on a convex set, with the regularity of the monotone optimal transport. As a starting point, the following example shows that in general, the monotone optimal map is not more regular than the initial data: consider $\Omega=\Omega^*=(0,1)\times(0,1)$ and suppose that $f$ and $g$ are bounded away from zero and infinity. Let $f_{x_1}(\cdot):=f(x_1, \cdot)$ and $g_{x_1}(\cdot):=g(x_1,\cdot)$, and write $F_{x_1}$ and $G_{x_1}$ for their primitives, respectively. If $F_{x_1}(0)=G_{x_1}(0)=0$, $F_{x_1}(1)=G_{x_1}(1)$, and $F_{x_1}(x_2) \geq G_{x_1}(x_2)$ for all $x_1, x_2$, then the optimal potential is $u(x_1,x_2)=-x_2$ and the monotone optimal transport is explicitly given by 
\begin{equation} \label{ex1}
T(x_1,x_2)=G_{x_1}^{-1}(F_{x_1}(x_2)). \footnote{This example was communicated to us by Filippo Santambrogio.} 
\end{equation}
We note that the primitives endow $T$ with more regularity than the densities in the $x_2$ variable; nevertheless, the map has the same regularity as $f$ and $g$ in the $x_1$ variable. For example, continuous densities may produce a monotone optimal map which is not H\"older continuous. Moreover, the same example shows that a discontinuous monotone optimal map may arise from discontinuous, bounded initial data (see also \cite[Example 4.5]{FGP}). 

Our main result constructs a family of examples which shed further light on the nature of the monotone optimal transport map, see Theorem \ref{thm:main}. More precisely, we build examples where the modulus of continuity of the monotone optimal transport is worse than the modulus of continuity of the densities. Indeed, some corollaries of our result include the following statements ($\Omega$ is the interior of a triangle, and the densities are positive on $\overline{\Omega}$):
\begin{eqnarray}
\label{eqn:c-infty-23}
f,g \in C^\infty (\Omega) &\not \Rightarrow& T \in C_{loc}^{0,\frac{2}{3} +\ep}(\Omega), \qquad \forall \text{$\ep>0$};
\\
\label{bdry}
f,g \in C^{\infty} (\overline{\Omega}) &\not \Rightarrow& T \in C^{0,\frac{1}{2} +\ep}(\overline{\Omega}), \qquad \forall  \text{$\ep>0$};
\\
\label{eqn:c21-12}
f,g \in C^{2,1} (\Omega) &\not \Rightarrow& T \in C_{loc}^{0,\frac{1}{2} +\ep}(\Omega), \qquad \forall  \text{$\ep>0$};
\\
\label{eqn:calpha-calpha}
f,g \in C^{0,\alpha}(\Omega) &\not \Rightarrow& T \in C_{loc}^{0,\frac{\alpha}{\alpha+2} +\ep} (\Omega), \qquad \forall \text{$\ep>0$, $\forall$ $\alpha>0$}; 
\\ 
\label{w1p}
f,g \in W^{1,p} (\Omega) &\not \Rightarrow& T \in C_{loc}^{0,\frac{p-1}{3p-1} +\ep}(\Omega), \qquad \forall \text{$\ep>0$} .
\end{eqnarray}

Note that \eqref{eqn:c-infty-23} recovers the aforementioned example appearing in \cite[\S 4]{LSW}. On the other hand, \eqref{bdry} improves the H\"older exponent when boundary regularity is taken into account. In \eqref{eqn:c21-12}, \eqref{eqn:calpha-calpha}, and \eqref{w1p} the regularity of the densities give a quantitative estimate on the H\"older exponent of the optimal map; in particular, \eqref{eqn:calpha-calpha} illustrates that one may degenerate the H\"older regularity of $T$ by reducing $\alpha$ and improves the bound derived from \eqref{ex1} (i.e. that $C^\alpha$ data implies a transport that is not more regular than $C^\alpha$). This improvement is generated by choosing nonparallel transport rays, see \S \ref{S1}.      


The situation for the quadratic cost, i.e. $c(x,y)=|x-y|^2/2$, is of a completely different nature. In this setting, if the densities are bounded away from zero and infinity on convex domains, then Caffarelli \cite{Ca91,Ca92,Ca96a} asserts that $T$ is H\"older continuous, cf. \cite[Example 4.5]{FGP}; moreover, if $f,g \in C^{k,\beta}$, then $T\in C^{k+1,\ep}$ for $k=0,1,\ldots$, and $\ep \in (0,\beta)$, cf. \eqref{eqn:c-infty-23}, \eqref{eqn:c21-12}, and \eqref{eqn:calpha-calpha} (see also Remark \ref{extens}). For general cost functions satisfying a strong form of the so-called Ma-Trudinger-Wang condition \cite{T1}, it was shown in \cite{Li} under mild conditions on the densities, that the transport map is H\"older continuous up to the boundary if the domains satisfy a certain type of convexity condition called $c$-convexity (see also the interior regularity result in \cite{Fi} for cost functions satisfying a weak form of the Ma-Trudinger-Wang condition). Indeed, the domain $\Omega$ above satisfies this convexity condition for a large class of cost functions. Thus, the Monge cost generates a significant loss of regularity compared to other cost functions for which a regularity theory is known. Nevertheless, it remains an open problem to determine sufficient conditions on the initial data that ensure the continuity of optimal maps in the classical Monge problem.

\section{Main result} \label{S1}
For the reader's convenience, we keep our notation as close as possible to \cite[\S 4]{LSW}. Our examples are built as follows.
The idea of the construction is to a priori fix the transport rays by specifying a family of line segments that may be thought of as the lines along which the optimal transport acts.
Let $\omega: [0,1] \to [0,1/2]$ be a function such that
\begin{equation}\label{defn:omega}
 \omega \in C^0([0,1]) \cap C^\infty ((0,1)), \quad \omega'(t)>0 \; \forall t \in (0,1), \quad \omega(0) = 0, \quad \omega(1) = \frac{1}{2}.
 \end{equation}
In our situation, we consider the following transport rays:
\begin{equation}\label{defn:la}
l_a =\{(x_1,x_2)\in \R^2: x_2= \omega(a)(x_1+a), \; x_1 \in (-a, 1)\} \qquad a\in(0,1).
\end{equation}
It is clear that the segments $l_a$ do not mutually intersect. The domain representing both source and target will be $\Delta \subseteq \R^2$, where 
\begin{equation}
\label{eqn:delta}
\Delta = \mbox{interior of the triangle with vertices $(-1,0)$, $(1,1)$, and $(1,0)$;}
\end{equation}
The initial and final density will have the form 
\begin{equation}\label{defn:fg}
 f(x) = 1_\Delta(x), \qquad g(x)= 1_\Delta(x) \big(1 + c( \zeta(x_1) + \eta(x_2) )\big), \qquad x = (x_1,x_2)\in \R^2,
\end{equation}
where 
\begin{equation}\label{defn:czetaeta}
\zeta(x_1) = -12 x_1^2 +12 x_1-2, \quad x_1\in [-1,1], \qquad \eta \in L^\infty(0,1) \cap C^\infty(0,1),
\end{equation}
and $c>0$ is chosen in such a way that $g$ is bounded away from zero:
\begin{equation}\label{defn:c}
c < \frac{1}{2\big(\|\zeta\|_{L^\infty(-1,1)}+ \|\eta\|_{L^\infty(0,1)}\big)}.
\end{equation}

Since we want the segments $l_a$ to be transport rays for the optimal map, the following mass balance condition for the region in the domain below each $l_a$ must be satisfied:
\begin{equation}\label{eqn:equality-mass}
 \int_{\Gamma_a} f = \int_{\Gamma_a} g  \qquad \forall a\in[0,1],
\end{equation}
where $\Gamma_a$ is the subgraph of $l_a$ in $\Delta$, namely
the triangle formed by $(-a,0)$, $(1,\omega(a)(1+a))$, and $(1,0)$; this mass constraint will determine the function $\eta$.

It is well known that the optimal map for the Monge cost is not unique. On the other hand, there exists a unique monotone optimal map \cite{FM} which in our setting can be shown to satisfy 
$$T(x) \in l_a \qquad \forall x\in l_a, \; a\in (0,1),$$
and
$$ (T(x)-T(y)) \cdot (x-y) \geq 0 \qquad \forall x,y \in l_a, \; a\in (0,1).
$$
From the construction, it follows that this map is pointwise defined in $\Delta$, and we investigate its regularity. In particular, it is not difficult to see that the origin is a fixed point for a suitable continuous extension and that 
the regularity of this map at the origin strongly depends on the behavior of $\omega$ (see Figure~\ref{xenon}); indeed, in the next section we show how suitable choices of $\omega$ will provide a collection of counterexamples to regularity. All of these results will be consequences of the following theorem. 
%


\begin{figure}[] \label{fig:controes}
\centering 
\includegraphics[scale= .8]{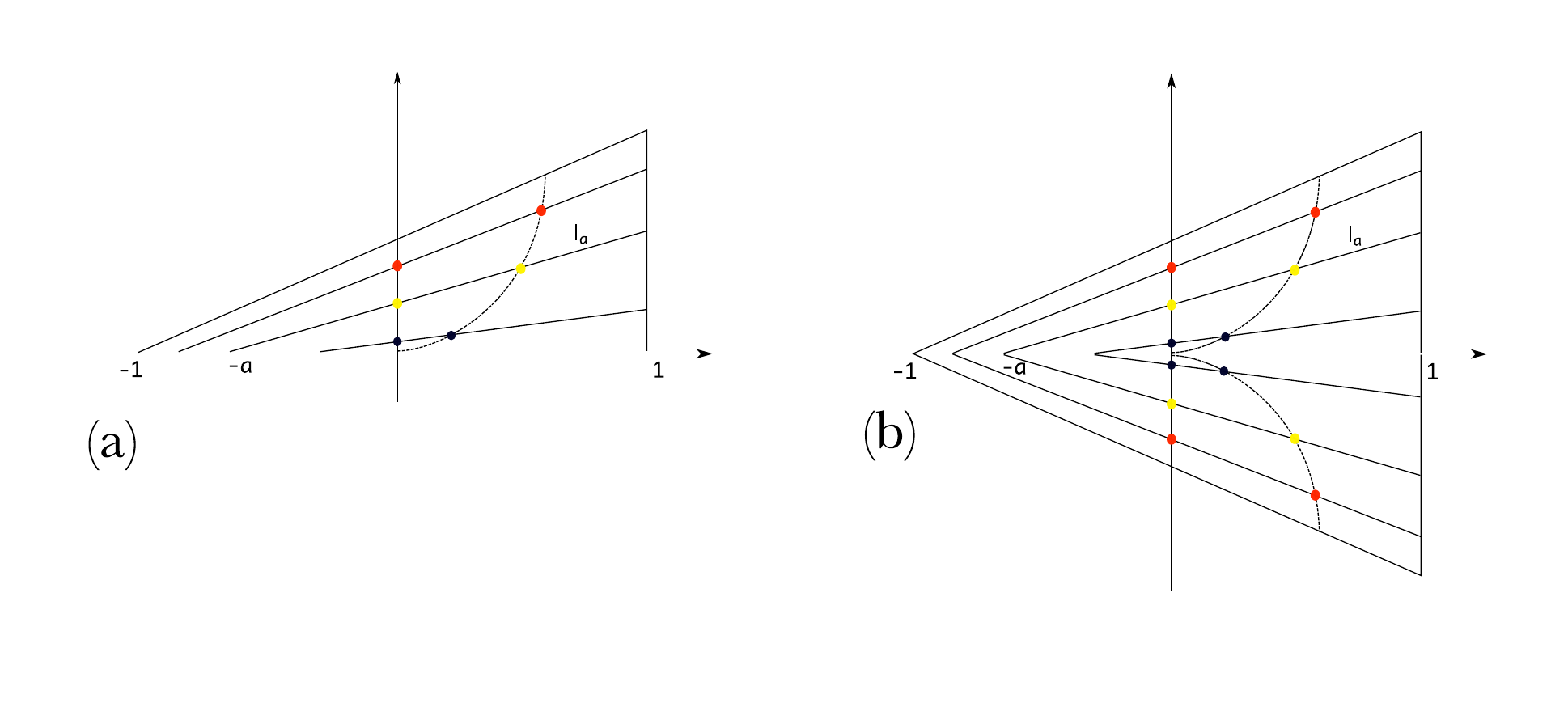}
\caption{Figure (a) depicts the nature of a boundary singularity when $\omega(a) = a^s$:
points on the vertical axis are mapped to points of the same color along the corresponding transport rays, and the horizontal component of the images approach the origin in a H\"older way, see Corollary \ref{cor:hold-dens}. Figure (b) illustrates how an interior singular point arises after reflecting the data.}
\label{xenon}
\end{figure} 

\begin{theorem}\label{thm:main}
Let $\omega$ be as in \eqref{defn:omega}, $\Delta \subseteq \R^2$ as in \eqref{eqn:delta}, and $\{l_a\}_{a\in(0,1)}$ the family of lines defined in \eqref{defn:la}. 
Let $f, g, c, \eta, \zeta$ be as in \eqref{defn:fg}, \eqref{defn:czetaeta}, \eqref{defn:c} and assume that \eqref{eqn:equality-mass} holds and $\displaystyle \lim_{t\to 0+} \eta(t) = 0$. Then the monotone optimal transport $T=(T_1,T_2)$ between $f$ and $g$ with respect to the Monge cost satisfies
\begin{equation}\label{ts:thm}
0< \liminf_{a\to 0+} \frac{T_1(\omega(a) a \, {\bf e}_2)}{a} \leq  \limsup_{a\to 0+} \frac{T_1(\omega(a) a \, {\bf e}_2)}{a} <\infty.
\end{equation}
\end{theorem}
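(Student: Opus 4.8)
The plan is to reduce the two–dimensional transport to a one–parameter family of one–dimensional monotone rearrangements along the rays $l_a$, and then to read off the behaviour of the rearrangement near the origin from an explicit Jacobian computation. First I would introduce the change of variables $\Phi(a,t)=(t,\omega(a)(t+a))$, which parametrizes the point of $l_a$ lying over the abscissa $x_1=t$, $t\in(-a,1)$; note that $\Phi(a,0)=(0,\omega(a)a)=\omega(a)a\,{\bf e}_2$ is exactly the point appearing in \eqref{ts:thm}. A direct computation gives the positive Jacobian $J(a,t)=\omega'(a)(t+a)+\omega(a)$. Since the monotone optimal map preserves each ray and is monotone along it, $T$ restricted to $l_a$ is the increasing rearrangement of the linear density $f(\Phi(a,\cdot))J(a,\cdot)=J(a,\cdot)$ onto $g(\Phi(a,\cdot))J(a,\cdot)$, the equality of the two total masses on each ray being precisely the differentiated form of the balance condition \eqref{eqn:equality-mass}. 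Writing $T(\Phi(a,t))=\Phi(a,S_a(t))$, the quantity $s(a):=S_a(0)=T_1(\omega(a)a\,{\bf e}_2)$ is then characterized by the cumulative–mass identity
\[
\int_{-a}^{s(a)} g(\Phi(a,\tau))\,J(a,\tau)\,d\tau \;=\; \int_{-a}^{0} J(a,\tau)\,d\tau .
\]

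Next I would compute $P(s):=\int_{-a}^{s} J(a,\tau)\,d\tau=\tfrac12\omega'(a)(s+a)^2+\omega(a)(s+a)$, and exploit that on any range $\tau\in[-a,Ca]$ with $C$ fixed one has $\tau\to0$ and $\omega(a)(\tau+a)\to0$, so that $g(\Phi(a,\tau))\to 1+c\,\zeta(0)=1-2c=:g_0$ uniformly as $a\to0+$, using $\zeta(0)=-2$ and $\lim_{t\to0+}\eta(t)=0$. The constraint \eqref{defn:c} together with $\|\zeta\|_{L^\infty(-1,1)}\ge2$ forces $g_0\in(\tfrac12,1)$. Because $s\mapsto\int_{-a}^{s}g(\Phi(a,\tau))J(a,\tau)\,d\tau$ is increasing, it suffices to test the candidate values $s=Ca$ and $s=\lambda a$: bounding $g$ by its supremum $M$ and infimum $m$ over the relevant range (both tending to $g_0$) and writing everything in units of $\omega(a)a$, each comparison with $\int_{-a}^0 J$ reduces to a polynomial inequality in the scale-invariant variable $r=(s+a)/a$ whose coefficients are affine in $\beta:=\omega'(a)a/\omega(a)$ and carry a definite sign. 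Concretely, $M(1+\lambda)^2<1$ (i.e. $1+\lambda<g_0^{-1/2}$) forces $\int_{-a}^{\lambda a}gJ<\int_{-a}^0 J$, and $m(C+1)>1$ (i.e. $C+1>g_0^{-1}$) forces $\int_{-a}^{Ca}gJ>\int_{-a}^0 J$; hence $\lambda a<s(a)<Ca$ for all small $a$, which is \eqref{ts:thm}.

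The main obstacle I anticipate is the uniformity over $a$: for a general $\omega$ satisfying \eqref{defn:omega} the ratio $\beta=\omega'(a)a/\omega(a)$ need not converge as $a\to0+$ (this is also why \eqref{ts:thm} asserts only two-sided bounds and not a limit), so the estimates on $s(a)/a$ must be obtained with no control on $\beta$. The resolution is exactly the observation above that, once the $\beta$–free conditions $1+\lambda<g_0^{-1/2}$ and $C+1>g_0^{-1}$ hold, the $\beta$–dependent terms of the reduced inequalities have a fixed favourable sign, so the bounds are automatically uniform in $\beta$. The remaining care is quantitative: one must check that the convergence $g(\Phi(a,\cdot))\to g_0$ is genuinely uniform on $[-a,Ca]$ — resting on the continuity of $\zeta$ at $0$ and the hypothesis $\eta(t)\to0$ — and that $m,M$ remain inside $(\tfrac12,1)$ for $a$ small, so that $g_0^{-1/2}>1$ and a genuine gap $0<\lambda<C$ survives, yielding both the strict positivity of the $\liminf$ and the finiteness of the $\limsup$.
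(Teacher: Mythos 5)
Your proposal is correct and takes essentially the same route as the paper: your cumulative-mass identity along each ray is exactly the paper's \eqref{finito} (your Jacobian $J(a,\tau)$ equals $\omega'(a)\bigl(\tau+a+\omega(a)/\omega'(a)\bigr)$, which the paper obtains by differentiating the subgraph mass balance in \eqref{eqn:mist}), and your coefficient-wise comparison in $\beta=\omega'(a)a/\omega(a)$ is the same algebra the paper packages as monotonicity of $t\mapsto t^2/2+t\,\omega(a)/\omega'(a)$, tested at $2a$ for the upper bound (using $g\ge 1/2$) and at $a/(1-c_0)^{1/2}$ for the lower bound (using $g\le 1-c_0$ near the origin). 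The only caveat is that your opening assertion---that the monotone optimal map preserves each $l_a$ and is monotone along it---is precisely what the paper justifies via Lemma \ref{potent} (the Kantorovich potential whose equality cases single out the rays) together with the constructions in \cite{CFM,TW,FM}, so that step should be cited or proved rather than taken for granted.
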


To obtain counterexamples to interior regularity, it suffices to reflect the picture across the $x_1$-axis (see Figure \ref{xenon}); in this way we construct initial data for which the optimal map fails to be regular at an interior fixed point.
Let $\Delta' \subseteq \R^2$ be defined as 
\begin{equation}
\label{eqn:delta'}
\Delta': = \mbox{interior of the triangle with vertices $(-1,0)$, $(1,1)$, and $(1,-1)$,}
\end{equation}
and consider
\begin{equation}\label{defn:fg-refl}
 \tilde f(x) =\frac{ 1_{\Delta'}(x)}{2}, \qquad \tilde g(x)= \frac{ 1_{\Delta'}(x)}{2} \big(1 + c( \zeta(x_1) + \eta(|x_2|) )\big), \qquad x = (x_1,x_2)\in \R^2,
\end{equation}
where $c, \eta, \zeta$ are as in \eqref{defn:czetaeta} and \eqref{defn:c}.
By the uniqueness of monotone optimal mappings \cite{FM} it follows that the symmetric extension of $T$ across the $x_1$-axis is the corresponding optimal map between the densities $\tilde f$ and $\tilde g$. Note that the regularity of $\tilde g$ on the $x_1$-axis is affected by the choice of $\omega$ (via $\eta$).

\subsection{Obstruction to regularity with smooth data}
The first corollary appears in work of Li, Santambrogio, and Wang \cite[\S 4]{LSW}. It states that the monotone optimal transport map between smooth densities supported on convex domains is not, in general, more regular than a $C^{\frac{2}{3}}$ function in the interior. The proofs of the corollaries will employ Theorem \ref{thm:main} to describe the regularity of the transport map as well as an analysis lemma to guarantee the regularity of the target density (see Lemma \ref{lemma:reg-eta}).        

\begin{corollary}
Let $\Delta'$ be as  in \eqref{eqn:delta'}. There exists $g\in C^\infty (\Delta')$ bounded away from zero such that if $T$ is the monotone optimal transport map between $1_{\Delta'}$ and $g$, then
\begin{equation}\label{eqn:pippo1}
T\notin C^{\frac{2}{3}+\ep}(B_{1/4}(0)) \hskip .1in \text{for any $\ep>0$}.
\end{equation}
\end{corollary}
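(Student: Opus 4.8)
The plan is to force the borderline $C^{2/3}$ singularity by choosing the profile $\omega$ in \eqref{defn:omega} so that the transport rays accumulate at the origin like a square root. Concretely I would take $\omega(a)=\tfrac12\sqrt a$, which satisfies every requirement in \eqref{defn:omega}: it is continuous on $[0,1]$, smooth and strictly increasing on $(0,1)$, vanishes at $0$, and equals $1/2$ at $1$. The reason for this exponent is a one-line scaling computation at the fixed point. The point $x_a:=\omega(a)\,a\,{\bf e}_2=(0,\omega(a)a)$ is the intersection of $l_a$ with the $x_2$-axis and satisfies $|x_a|=\omega(a)a\sim\tfrac12 a^{3/2}$, so that $a\sim(2|x_a|)^{2/3}$. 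Since $T$ maps $x_a$ into $l_a$ and, by Theorem \ref{thm:main}, $T_1(x_a)$ is comparable to $a$ while $T_2(x_a)=\omega(a)\big(T_1(x_a)+a\big)\lesssim\omega(a)a=o(a)$, the first component dominates and $|T(x_a)|\sim a\sim|x_a|^{2/3}$. This is exactly the critical H\"older rate we are after.

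Before extracting the counterexample I would check that the data are admissible and smooth. With $\omega$ fixed, the function $\eta$ is determined by the mass-balance identity \eqref{eqn:equality-mass}, which for these densities reduces to $\int_{\Gamma_a}\big(\zeta(x_1)+\eta(x_2)\big)\,dx=0$ for all $a$; the specific quadratic $\zeta$ of \eqref{defn:czetaeta} is calibrated so that this balance is compatible with $\eta(0)=0$, matching the hypothesis $\lim_{t\to0+}\eta(t)=0$ of Theorem \ref{thm:main}. The regularity of $\eta$ is then the content of Lemma \ref{lemma:reg-eta}: smoothness of $\omega$ on $(0,1)$ yields $\eta\in C^\infty(0,1)$, and the lemma supplies the vanishing of the odd-order data at $0$ needed for the even extension $x_2\mapsto\eta(|x_2|)$ to be smooth across the $x_1$-axis. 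Combined with the bound \eqref{defn:c} on $c$, which guarantees $1+c(\zeta+\eta)\ge\tfrac12>0$, this shows that the reflected target density $\tilde g$ of \eqref{defn:fg-refl} lies in $C^\infty(\Delta')$ and is bounded away from zero; up to the normalization by $2$ this is the density $g$ in the statement.

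With admissibility in hand I would pass to the reflected configuration \eqref{defn:fg-refl}. By uniqueness of monotone maps the optimal $T$ between $1_{\Delta'}$ and $g$ is the reflection across the $x_1$-axis of the map on $\Delta$, so the origin becomes an interior fixed point, $T(0)=0$, and $B_{1/4}(0)\subset\Delta'$. Feeding \eqref{ts:thm} into the scaling above, there are constants $0<c_1\le c_2<\infty$ with $c_1a\le T_1(x_a)\le c_2a$ for all small $a>0$, whence for every $\ep>0$
\begin{equation*}
\frac{|T(x_a)-T(0)|}{|x_a-0|^{\frac{2}{3}+\ep}}\;\ge\;\frac{c_1a}{\big(\tfrac12 a^{3/2}\big)^{\frac{2}{3}+\ep}}\;=\;C\,a^{-\frac{3}{2}\ep}\;\xrightarrow[a\to0^+]{}\;\infty .
\end{equation*}
Since $x_a\to0$ inside $B_{1/4}(0)$, this makes the H\"older seminorm infinite and proves $T\notin C^{\frac23+\ep}(B_{1/4}(0))$, which is \eqref{eqn:pippo1}.

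The genuinely delicate point is the $C^\infty$ regularity of $g$ across the reflection axis. The lower bound $T_1(x_a)\gtrsim a$ is handed to us by Theorem \ref{thm:main} and the H\"older estimate is then immediate; by contrast, showing that the $\eta$ produced by \eqref{eqn:equality-mass} for $\omega(a)=\tfrac12\sqrt a$ extends to an even $C^\infty$ function is not automatic, since $\omega$ has a vertical tangent at $0$. This is precisely the role of Lemma \ref{lemma:reg-eta}, and verifying that its hypotheses hold for this particular $\omega$ is where I expect the main effort to lie.
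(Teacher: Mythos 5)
Your proposal is correct and takes essentially the same route as the paper's own proof: the same choice $\omega(a)=\sqrt{a}/2$, the same appeal to Lemma \ref{lemma:reg-eta} for the smooth even extension of $\eta$ and to Theorem \ref{thm:main} for the two-sided bound $T_1(\omega(a)a\,{\bf e}_2)\sim a$, followed by reflection across the $x_1$-axis and the scaling $|\omega(a)a\,{\bf e}_2|\sim a^{3/2}$ to blow up the $C^{2/3+\ep}$ seminorm. The only cosmetic difference is that you spell out intermediate details (e.g.\ $T_2=o(a)$ along the axis and the continuous extension $T(0)=0$) that the paper leaves implicit.
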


\begin{proof}
By selecting $\omega(a)=\sqrt{a}/2$, we choose $\eta \in C^\infty[-1,1]$ as in Lemma \ref{lemma:reg-eta} and $g$ as in \eqref{defn:fg-refl}. An application of Theorem \ref{ts:thm} yields
\begin{equation}\label{eqn:pippo}
 0<\liminf_{t\to 0+} \frac{T_1(t{\bf e}_2)}{t^{\frac{2}{3}}} \leq \limsup_{t\to 0+} \frac{T_1(t{\bf e}_2)}{t^{\frac{2}{3}}}<\infty.
\end{equation}
Thus, $T_1(t{\bf e}_2)$ can be extended continuously to $t=0$ by the second inequality in \eqref{eqn:pippo} with the value $0$. Finally, \eqref{eqn:pippo1} follows from the first inequality in \eqref{eqn:pippo}.     
\end{proof}

Next, we show that the monotone map between smooth data and convex domains is in general not better than $C^{\frac{1}{2}}$ up to the boundary.

\begin{corollary}
Let $\Delta$ be as  in \eqref{defn:fg}. There exists $g\in C^\infty (\overline{\Delta})$ bounded away from zero such that if $T$ is the monotone transport map between $1_{\Delta}$ and $g$, then
\begin{equation}\label{eqn:pippo4}
T\notin C^{\frac{1}{2}+\ep}(B_{1/4}(0)\cap \overline{\Delta}) \hskip .1in \text{for any $\ep>0$}.
\end{equation}
\end{corollary}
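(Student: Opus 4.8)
The plan is to follow the proof of the previous corollary but to work with the \emph{unreflected} data $f=1_\Delta$ and $g$ as in \eqref{defn:fg} on the original triangle $\Delta$, replacing the square-root profile by a \emph{linear} one. I would choose $\omega(a)=a/2$, which satisfies \eqref{defn:omega}. With this choice the transport ray $l_a$ meets the vertical axis at $\omega(a)\,a\,{\bf e}_2=(0,a^2/2)$, so the scale along the axis is quadratic in $a$ rather than of order $a^{3/2}$; this is exactly what converts the interior exponent $2/3$ into the boundary exponent $1/2$.

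First I would construct the target density. Since $f=1_\Delta$, the mass-balance identity \eqref{eqn:equality-mass} reduces to $\int_{\Gamma_a}\big(\zeta(x_1)+\eta(x_2)\big)\,dx=0$ for every $a$, and this determines $\eta$. Applying the analysis Lemma \ref{lemma:reg-eta} to the linear profile $\omega(a)=a/2$, I would conclude that $\eta$ extends to a function of class $C^\infty([0,1])$ with $\eta(0)=0$, so that in particular $\lim_{t\to 0+}\eta(t)=0$. (One can already see the vanishing of $\eta(0)$ heuristically: letting $a\to 0+$ in the differentiated mass balance probes $\eta$ near $0$ with weight $\tfrac{x_1}{2}$, and the moment condition $\int_0^1 x_1\,\zeta(x_1)\,dx_1=0$ then forces $\eta(0)=0$.) Because $\zeta$ is a polynomial and $\eta\in C^\infty([0,1])$, the density $g=1_\Delta\big(1+c(\zeta+\eta)\big)$ is of class $C^\infty(\overline\Delta)$, and the choice of $c$ in \eqref{defn:c} keeps it bounded away from zero. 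The point to stress is that, since we do not reflect across the $x_1$-axis, we only need one-sided smoothness of $\eta$ at $0$ (the $x_1$-axis is now part of $\partial\Delta$) rather than the even-smoothness required in the interior corollary; it is this weaker requirement that permits the more singular linear choice of $\omega$.

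With the data in place I would invoke Theorem \ref{thm:main}, all of whose hypotheses now hold. Evaluating \eqref{ts:thm} at $\omega(a)\,a\,{\bf e}_2=(0,a^2/2)$ and substituting $t=a^2/2$ (so $a=\sqrt{2t}$) turns the conclusion into
\[
 0<\liminf_{t\to 0+}\frac{T_1(t\,{\bf e}_2)}{t^{1/2}}\leq \limsup_{t\to 0+}\frac{T_1(t\,{\bf e}_2)}{t^{1/2}}<\infty.
\]
The upper bound allows $t\mapsto T_1(t\,{\bf e}_2)$ to be extended continuously to $t=0$ with value $0$ (the value forced by the origin being a fixed point). The lower bound gives a constant $c_0>0$ with $T_1(t\,{\bf e}_2)\geq c_0\,t^{1/2}$ for all small $t>0$; since the points $(0,t)$ lie in $B_{1/4}(0)\cap\overline\Delta$ for $0<t<1/4$ and accumulate at the boundary point $0\in\partial\Delta$, a bound $|T_1(0,t)-T_1(0,0)|\leq C\,t^{1/2+\ep}$ would give $c_0\leq C\,t^{\ep}\to 0$, a contradiction. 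This is \eqref{eqn:pippo4}.

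The step I expect to be the main obstacle is the boundary regularity of $\eta$: one must show that the linear profile $\omega(a)=a/2$, inserted into the integral constraint coming from \eqref{eqn:equality-mass}, yields an $\eta$ that is smooth \emph{up to} $t=0$, not merely on $(0,1)$. This is precisely the content of Lemma \ref{lemma:reg-eta}, and the delicate part is the limit $t\to 0+$, where the shrinking triangles $\Gamma_a$ sample $\eta$ on the vanishing scale $x_2\lesssim\omega(a)\asymp a$ and the moment structure of $\zeta$ must interact correctly with the constraint to secure both $\eta(0)=0$ and finite one-sided derivatives of every order.
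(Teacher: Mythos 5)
Your proposal is correct and follows essentially the same route as the paper: choose $\omega(a)=a/2$, obtain $\eta\in C^\infty[0,1]$ with $\eta(0)=0$ from Lemma \ref{lemma:reg-eta} (here $s=1=1/n$ with $n=1$), define $g$ by \eqref{defn:fg} on the unreflected triangle, and apply Theorem \ref{thm:main} with the substitution $t=\omega(a)a=a^2/2$ to get the $t^{1/2}$ rate, from which the two inequalities yield the continuous extension at $0$ and the failure of $C^{1/2+\ep}$ regularity. Your observation that working on $\overline{\Delta}$ only requires one-sided smoothness of $\eta$ at $t=0$ (unlike the even reflection needed in the interior case) is exactly the mechanism the paper exploits.
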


\begin{proof}
Let $\omega(a)=a/2$ and take $\eta \in C^\infty[0,1]$ as in Lemma \ref{lemma:reg-eta}. We select the smooth target density $g$ as in \eqref{defn:fg}, and apply Theorem \ref{ts:thm} to obtain
\begin{equation}\label{eqn:pippo3}
0<\liminf_{t\to 0+} \frac{T_1(t{\bf e}_2)}{t^{1/2}} \leq \limsup_{t\to 0+} \frac{T_1(t{\bf e}_2)}{t^{1/2}}<\infty.
\end{equation}
Thus, $T_1(t{\bf e}_2)$ can be extended continuously to $t=0$ by the second inequality in \eqref{eqn:pippo3} with the value $0$. Finally, \eqref{eqn:pippo4} follows from the first inequality in \eqref{eqn:pippo3}.  
\end{proof}

When $\omega(a)=a/2$, it can be seen from Lemma \ref{lemma:reg-eta} that $\eta \in C^\infty[0,1]$. In fact, by utilizing \eqref{eqn:2-deriv}, it is not difficult to show that $\eta$ is quadratic at the origin with a bounded left and right {third} derivative; thus, its even reflection is $C^{2,1}$, and this implies the following interior result:
\begin{corollary}
Let $\Delta'$ be as  in \eqref{eqn:delta'}. There exists $g\in C^{2,1}(\Delta')$ bounded away from zero such that if $T$ is the monotone transport map between $1_{\Delta'}$ and $g$, then
$$T\notin C^{\frac{1}{2}+\ep}(B_{1/4}(0)) \hskip .1in \text{for any $\ep>0$}.$$
\end{corollary}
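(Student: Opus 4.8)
The plan is to transcribe the transport side of the boundary $C^{1/2}$ corollary essentially verbatim, and to supply exactly one new ingredient: after the even reflection across the $x_1$-axis the target density $\tilde g$ must be shown to lie in $C^{2,1}(\Delta')$ rather than merely $C^\infty$ on a half-domain. Concretely I would again take $\omega(a)=a/2$, let $\eta\in C^\infty[0,1]$ be the function produced by Lemma \ref{lemma:reg-eta}, and define $\tilde g$ by the reflected formula \eqref{defn:fg-refl} on $\Delta'$. Since $\omega(a)a=a^2/2$, the substitution $t=a^2/2$ turns the two-sided bound \eqref{ts:thm} of Theorem \ref{thm:main} into
\[
0 < \liminf_{t\to 0+} \frac{T_1(t{\bf e}_2)}{t^{1/2}} \le \limsup_{t\to 0+} \frac{T_1(t{\bf e}_2)}{t^{1/2}} < \infty,
\]
exactly as in \eqref{eqn:pippo3}.

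The genuinely new step is the regularity of $\tilde g$ across the axis $\{x_2=0\}$. Because $\zeta$ is a polynomial and $\eta(|x_2|)$ is the only term touched by the reflection, the regularity of $\tilde g$ at the axis is governed by the even extension $\tilde\eta(x_2):=\eta(|x_2|)$, which is smooth away from $x_2=0$; so everything reduces to the Taylor behavior of $\eta$ at the origin. Here I would use \eqref{eqn:2-deriv}: from the explicit expression for $\eta''$ one reads off that $\eta(0)=0$ (consistent with the standing hypothesis $\lim_{t\to0+}\eta(t)=0$), that $\eta'(0)=0$ (so $\eta$ vanishes to second order), and that $\eta'''$ is bounded on a one-sided neighborhood of $0$. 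The vanishing of $\eta'(0)$ forces the first derivative of $\tilde\eta$ to match across $0$, so $\tilde\eta\in C^2$; the boundedness of the one-sided third derivative makes $\tilde\eta''$ Lipschitz, since $\tilde\eta''(x_2)=\eta''(|x_2|)$ has one-sided slopes $\pm\eta'''(0^+)$ at the origin. Hence $\tilde\eta\in C^{2,1}$ and therefore $\tilde g\in C^{2,1}(\Delta')$. (Generically $\eta'''(0^+)\neq0$, so $\tilde\eta\notin C^3$ and the density regularity is sharp, although only the inclusion $\tilde g\in C^{2,1}$ is needed for the statement.)

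With both ingredients in hand the conclusion is immediate. The upper bound lets me extend $T_1(t{\bf e}_2)$ continuously to $t=0$ with value $0$, consistent with the origin being an interior fixed point of the symmetric configuration; the lower bound gives $T_1(t{\bf e}_2)\ge c\,t^{1/2}$ for all small $t>0$. If $T$ were in $C^{1/2+\ep}$ near the origin, then $|T_1(t{\bf e}_2)-T_1({\bf 0})|\le C\,t^{1/2+\ep}$, which combined with the lower bound forces $c\le C\,t^{\ep}\to0$, a contradiction. Since $0\in B_{1/4}(0)\cap\Delta'$, this yields $T\notin C^{1/2+\ep}(B_{1/4}(0))$ for every $\ep>0$. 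The only delicate point is the verification, via \eqref{eqn:2-deriv}, that $\eta'(0)=0$ and that $\eta'''$ stays bounded at the origin, which is precisely what rules out a corner in $\tilde\eta''$ and pins the density regularity at $C^{2,1}$; the remainder is a direct transcription of the boundary corollary.
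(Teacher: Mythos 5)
Your proposal is correct and follows the paper's own route essentially verbatim: the paper proves this corollary by the remark immediately preceding it, which uses \eqref{eqn:2-deriv} with $\omega(a)=a/2$ to see that $\eta$ is quadratic at the origin (so $\eta(0)=\eta'(0)=0$) with bounded one-sided third derivative, hence the even reflection $\eta(|\cdot|)$ is $C^{2,1}$, and then applies Theorem \ref{thm:main} exactly as in the boundary corollary to get the two-sided $t^{1/2}$ bounds and the contradiction with $C^{1/2+\ep}$ regularity at the interior fixed point. Your additional observations (the substitution $t=a^2/2$, the extension of $T_1(t{\bf e}_2)$ by $0$, and the genericity of $\eta'''(0^+)\neq 0$) match or harmlessly refine the paper's argument, so there is nothing to correct.
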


\subsection{Obstruction to regularity with continuous data} In the next corollary, we show that one may degenerate the possible H\"older continuity of the monotone optimal transport with (degenerate) H\"older data.

\begin{corollary}\label{cor:hold-dens}
Let $\Delta'$ be as in \eqref{eqn:delta'}. For every $0<\alpha \leq 1$, there exists $g\in C^{\alpha} (\Delta')$ bounded away from zero such that if $T$ is the monotone optimal transport map between $1_{\Delta'}$ and $g$, then
$$T\notin C^{\frac{\alpha}{2+\alpha}+\ep}(B_{1/4}(0)) \hskip .1in \text{for any $\ep>0$}.$$
\end{corollary}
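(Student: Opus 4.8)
The plan is to follow the template of the three preceding corollaries, the only new ingredient being the exponent in $\omega$. I would choose
$$\omega(a) = \tfrac{1}{2}\, a^{2/\alpha}, \qquad a\in[0,1],$$
which satisfies \eqref{defn:omega} for every $0<\alpha\leq 1$: it is continuous on $[0,1]$, smooth and strictly increasing on $(0,1)$, and maps $0\mapsto 0$, $1\mapsto \tfrac12$. The exponent $2/\alpha$ is dictated by the exponent-matching that the other corollaries make explicit: for $\omega(a)=\tfrac12 a^s$ the point $\omega(a)a\,{\bf e}_2$ sits at height $t\asymp a^{s+1}$, so Theorem~\ref{thm:main} yields $T_1(t\,{\bf e}_2)\asymp a\asymp t^{1/(s+1)}$, and solving $1/(s+1)=\alpha/(2+\alpha)$ gives $s=2/\alpha$. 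With $\omega$ fixed, $\eta$ is determined by the mass-balance identity \eqref{eqn:equality-mass}, and I would invoke Lemma~\ref{lemma:reg-eta} to record that this choice produces $\eta(t)\sim t^{\alpha}$ as $t\to0+$ (linear growth when $\alpha=1$); consequently the even reflection $\eta(|x_2|)$ is $C^\alpha$ near the $x_1$-axis and smooth away from it, so the density $\tilde g$ of \eqref{defn:fg-refl} lies in $C^\alpha(\Delta')$ and is bounded below by the choice of $c$ in \eqref{defn:c}.

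With the data in place, the argument is purely a change of variables. Since $\lim_{t\to0+}\eta(t)=0$, Theorem~\ref{thm:main} gives
$$0<\liminf_{a\to0+}\frac{T_1(\omega(a)\,a\,{\bf e}_2)}{a}\leq\limsup_{a\to0+}\frac{T_1(\omega(a)\,a\,{\bf e}_2)}{a}<\infty.$$
Setting $t=\omega(a)\,a=\tfrac12 a^{(2+\alpha)/\alpha}$, so that $a=(2t)^{\alpha/(2+\alpha)}$ and $a\asymp t^{\alpha/(2+\alpha)}$, substitution turns this into
$$0<\liminf_{t\to0+}\frac{T_1(t\,{\bf e}_2)}{t^{\frac{\alpha}{2+\alpha}}}\leq\limsup_{t\to0+}\frac{T_1(t\,{\bf e}_2)}{t^{\frac{\alpha}{2+\alpha}}}<\infty.$$
As before, the upper bound extends $T_1(t\,{\bf e}_2)$ continuously to $t=0$ with value $0$ (the origin is the fixed point), while the strictly positive liminf forces $|T_1(t\,{\bf e}_2)|/t^{\frac{\alpha}{2+\alpha}+\ep}\to\infty$, which is incompatible with any H\"older bound $|T_1(t\,{\bf e}_2)-T_1(0)|\lesssim t^{\frac{\alpha}{2+\alpha}+\ep}$. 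Hence $T\notin C^{\frac{\alpha}{2+\alpha}+\ep}(B_{1/4}(0))$ for every $\ep>0$.

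Since Theorem~\ref{thm:main} is already available, there is no genuinely new analytic obstacle; the one point demanding care is the regularity bookkeeping encapsulated in Lemma~\ref{lemma:reg-eta}. Concretely, one differentiates \eqref{eqn:equality-mass} and uses that $\zeta$ was calibrated so that the subgraph moment $Z(a)=\int_{-a}^1\zeta(x_1)(x_1+a)\,dx_1$ satisfies $Z(0)=Z'(0)=0$ and $Z''(0)=\zeta(0)\neq0$, whence $Z(a)\sim -a^2$; feeding this into the relation between $\eta$ and $\omega$ yields the leading behavior $\eta(t)\sim t^{2/s}=t^\alpha$, pinning down the H\"older class of the reflected density to be exactly $C^\alpha$ — no worse, so $g$ is in the stated class, and no better, so the obstruction is genuine. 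Everything else is a transcription of the computations already carried out for $\omega(a)=\sqrt a/2$ and $\omega(a)=a/2$.
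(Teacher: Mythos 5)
Your proposal is correct and follows essentially the same route as the paper: choosing $\omega(a)=a^{s}/2$ with $s=2/\alpha\geq 2$, invoking Lemma~\ref{lemma:reg-eta}(ii) to get $\eta\in C^{0,\alpha}[0,1]$ with $\eta(0)=0$ (hence $\tilde g\in C^{\alpha}(\Delta')$ via \eqref{defn:fg-refl}), and applying Theorem~\ref{thm:main} with the change of variables $t=\omega(a)a\asymp a^{(2+\alpha)/\alpha}$ to contradict any H\"older bound of exponent $\frac{\alpha}{2+\alpha}+\ep$ at the origin. The only cosmetic difference is that the paper parametrizes by $s$ and sets $\alpha:=2/s$ rather than solving for $s$ in terms of $\alpha$, and your closing claim that $\eta(t)\sim t^{\alpha}$ exactly (``no better'') is not needed for the conclusion.
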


\begin{proof}
Let $\omega(a)=a^{s}/2$ where $s\ge2$ and choose $\eta$ as in Lemma~\ref{lemma:reg-eta}, so that $\eta \in C^{0, 2/s}[0,1]$ and $\eta(0)=0$. Since the even reflection is Lipschitz, it follows that $\eta(|x|) \in C^{0, 2/s}[-1,1]$. Set $\alpha:=2/s$ and select the $C^{0, \alpha}$ target density $g$ as in \eqref{defn:fg-refl}. An application of Theorem \ref{ts:thm} yields
\begin{equation*}
 0<\liminf_{t\to 0+} \frac{T_1(t{\bf e}_2)}{t^{\frac{\alpha}{2+\alpha}}} \leq \limsup_{t\to 0+} \frac{T_1(t{\bf e}_2)}{t^{\frac{\alpha}{2+\alpha}}}<\infty.
\end{equation*}
Thus, $T_1(t{\bf e}_2)$ can be extended continuously to $t=0$ with the value $0$, and this finishes the proof.         
\end{proof}

\begin{remark} \label{extens}
In Corollary~\ref{cor:hold-dens} one could also take $\alpha \in (0,\infty)$. In fact, it follows from Remark~\ref{rmk:even-other-alpha} and Theorem~\ref{thm:main}
that for every $k\in \N$, $\beta\in (0,1)$ there exists $g\in C^{k,\beta} (\Delta')$ bounded away from zero such that the monotone optimal transport map $T$ between $1_{\Delta'}$ and $g$ is not $ C^{\frac{k+\beta}{2+k+\beta}+\ep}$
for any $\ep>0$.
\end{remark}

\begin{remark}
One may not hope for a H\"older continuous map with only $C^0$ data. Indeed, this fact can readily be inferred from the example giving rise to \eqref{ex1}, and it can also be deduced from Theorem \ref{thm:main} and Lemma~\ref{lemma:reg-eta} by selecting the function $\omega(a) = e^{1-1/a}/2$. 
\end{remark}

\begin{remark}
It is not difficult to see that the function $g$ constructed in the proof of Corollary~\ref{cor:hold-dens}
is $W^{1,p}(\Delta')$ for every $p>(1-\alpha)^{-1}$. In particular, this shows that for every $p>1$, $$f,g \in W^{1,p} (\Delta') \not \Rightarrow T \in C_{loc}^{0,\frac{p-1}{3p-1} +\ep}(\Delta'), \qquad \forall \text{$\ep>0$} .$$ If $p=\infty$, it follows that the monotone optimal map between Lipschitz densities is not more regular than $C^{\frac{1}{3}}$ (this can also be deduced from Corollary~\ref{cor:hold-dens} by choosing $\alpha=1$).
\end{remark}

\section{Proof}
In the following lemmas, we determine the target density by imposing the mass balance condition \eqref{eqn:equality-mass} and build a Kantorovich potential. This construction yields the existence of an optimal transport map for the Monge cost between $f$ and $g$ whose transport rays are given by \eqref{defn:la}.

\begin{lemma}\label{lemma:reg-eta}
Let $\Delta$ be as  in \eqref{defn:fg}, $\omega$ as in \eqref{defn:omega}, $\{l_a\}_{a\in(0,1)}$ the family of lines defined in \eqref{defn:la}. Moreover, let $f, g, c, \eta, \zeta$ be as in \eqref{defn:fg}, \eqref{defn:czetaeta}, \eqref{defn:c}. Then the following statements hold:
\begin{enumerate}
\item[(i)] \label{lemma:cont}
\eqref{eqn:equality-mass} holds if and only if
\begin{equation}\label{ts:eta}
\eta(t) = \frac{d^2}{dt^2}\big(t^2 a^2(t) \big) \qquad \forall t\in (0,1), 
\end{equation}
where {$a(t) \in C^{0}[0,1] \cap C^\infty(0,1)$} is the unique solution of
\begin{equation}
\label{eqn:t-a}
t = \omega(a)(1+a).
\end{equation}
\vskip .2in 
\item[(ii)]
 If $\omega(a) = a^s/2$ for $a\in [0,1]$ and some $s\in [2,\infty)$, then $\eta \in C^{0,\frac{2}{s}}[0,1]$ with $\eta(0)=0$.
 Moreover, if $s=1/n$ for $n\in \N$ then $\eta \in C^{\infty}[0,1]$ and if $s=1/(2n)$ for $n\in \N$, then $\eta$ has a $C^\infty$ even extension to $[-1,1]$.
 \vskip .2in 
 \item[(iii)]\label{lemma:c0stat}
If {$\omega(a) = e^{1-1/a}/2$ for every $a\in (0,1]$, $\omega(0)=0$,} then $ \eta \in C^{0}[0,1]$ with $\eta(0)=0$.
\end{enumerate}
\end{lemma}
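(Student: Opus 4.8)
The plan is to turn the mass balance \eqref{eqn:equality-mass} into a one-dimensional integral equation for $\eta$ and then invert it. Since $f=1_\Delta$, the quantity $\int_{\Gamma_a}f$ is just the area of $\Gamma_a$, so \eqref{eqn:equality-mass} is equivalent to $\int_{\Gamma_a}\big(\zeta(x_1)+\eta(x_2)\big)\,dx=0$. I would evaluate the two pieces by Fubini in opposite orders. Integrating $\zeta$ first in $x_2$ over the vertical slice $0<x_2<\omega(a)(x_1+a)$ reduces $\int_{\Gamma_a}\zeta\,dx$ to $\omega(a)\int_{-a}^1\zeta(x_1)(x_1+a)\,dx_1$, and the explicit cubic $\zeta$ makes this collapse to $-\omega(a)a^2(1+a)^2$, i.e. $-a^2(1+a)t$ after using $t=\omega(a)(1+a)$. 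Integrating $\eta$ first in $x_1$ over the horizontal slice (whose length is $(1+a)(t-x_2)/t$) gives $\tfrac{1+a}{t}\int_0^t\eta(s)(t-s)\,ds$. Equating the two and cancelling the common factor $1+a$ produces the key identity $\int_0^t\eta(s)(t-s)\,ds=t^2a(t)^2$ for all $t\in(0,1)$, the endpoints following by continuity. Differentiating twice (the boundary term vanishes) yields $\eta(t)=\tfrac{d^2}{dt^2}\big(t^2a^2(t)\big)$, and the converse follows by integrating this twice, both integration constants vanishing because $t^2a(t)^2=O(t^2)$. Finally $a(t)\in C^0[0,1]\cap C^\infty(0,1)$ is immediate from the inverse function theorem applied to $a\mapsto\omega(a)(1+a)$, whose derivative $\omega'(a)(1+a)+\omega(a)$ is strictly positive, so this map is a smooth increasing bijection of $(0,1)$.

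For part (ii) with $\omega(a)=a^s/2$, I would pass to the variable $r=t^{1/s}=2^{-1/s}a(1+a)^{1/s}$, which is analytic in $a$ with nonvanishing derivative at $a=0$; hence $a$ is an analytic function of $r$ with $a(0)=0$ and $a'(0)\neq0$, so $a^2$ is analytic in $r$ and vanishes there to second order. Writing $a(t)^2=\sum_{k\ge 2}\theta_k\,t^{k/s}$ gives $t^2a^2=\sum_{k\ge2}\theta_k\,t^{2+k/s}$, so that $\eta(t)=\phi(t^{1/s})$ for an analytic $\phi$ with $\phi(r)=O(r^2)$; in particular $\eta(0)=0$ and the leading term is a nonzero multiple of $t^{2/s}$, which fixes the exponent. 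To upgrade this to $\eta\in C^{0,2/s}[0,1]$ I would bound $|\eta'(t)|\le C\,t^{2/s-1}$ (from $\phi'(r)=O(r)$ and the chain rule) and split the estimate of $|\eta(t)-\eta(t')|$ according to whether $|t-t'|\ge t/2$ or $|t-t'|<t/2$, the first range handled by the pointwise bound $|\eta(t)|\le C\,t^{2/s}$ and the second by integrating the derivative bound.

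The remaining cases are softer. For $s=1/n$ I would substitute $a=b^n$, which turns $t=\tfrac12a^s(1+a)$ into the polynomial relation $t=\tfrac12 b(1+b^n)$ with strictly positive derivative, so $b(t)$ is smooth on $[0,1]$ and $t^2a^2=t^2b^{2n}$ is smooth, hence so is $\eta$; the case $s=1/(2n)$ is identical with $a=b^{2n}$, and here $t=\tfrac12 b(1+b^{2n})$ is an \emph{odd} function of $b$, so $b(t)$ is odd, $t^2b^{4n}$ is even in $t$, and its second derivative $\eta$ is then even, providing the $C^\infty$ even extension. For part (iii) with $\omega(a)=e^{1-1/a}/2$ I would compute $\tfrac{da}{dt}=\tfrac{a^2(1+a)}{t(1+a+a^2)}$ directly from $t=\tfrac12 e^{1-1/a}(1+a)$, and then differentiate $t^2a^2$ twice to get $\eta=2a^2R(a)+O(a^3)$ with $R(0)=1$; since $a(t)\to0$ as $t\to0$, this shows $\eta$ is continuous on $[0,1]$ with $\eta(0)=0$, the decay being only logarithmic, which is precisely why no H\"older regularity survives.

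I expect the genuine H\"older estimate in part (ii) to be the main obstacle. The leading-order asymptotics $\eta(t)\sim t^{2/s}$ are immediate from the expansion, but converting them into uniform control of the H\"older seminorm on $[0,1]$ is where care is needed: it relies on recognizing that $\eta$ is analytic in the single variable $t^{1/s}$ and vanishes there to order two, together with the near-origin case split above. Everything else reduces either to the elementary polynomial computation behind $\int_{-a}^1\zeta(x_1)(x_1+a)\,dx_1=-a^2(1+a)^2$ or to applying the inverse function theorem after the appropriate power substitution.
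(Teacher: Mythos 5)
Your proposal is correct and follows essentially the same route as the paper: both reduce the mass balance to the identity $N(t)=t^2a^2(t)$ (your kernel $\int_0^t\eta(s)(t-s)\,ds$ is exactly the paper's second antiderivative $N$, obtained there by the same Fubini-type computation), and both derive the regularity of $\eta$ by expressing $a(t)$ as a smooth function of the variable $t^{1/s}$ (resp.\ of $-1/\log t$ in part (iii)) via the implicit/inverse function theorem. The differences are only in execution---your H\"older bound via $|\eta'(t)|\le C\,t^{2/s-1}$ and a case split replaces the paper's term-by-term estimate of the explicit second-derivative formula, and your odd-inverse argument for $s=1/(2n)$ replaces the paper's remark that $t^2a^2(t)$ depends smoothly on $t^2$---so they amount to the same proof.
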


\begin{proof} $\mbox{}$
\\
{\it Proof of statement (i)}.
Note that
\eqref{eqn:equality-mass}
can be rewritten as
\begin{equation}\label{eqn:equality-mass-new2}
- \int_{\Gamma_a} \zeta(x_1) \, dx_1 \, dx_2 = \int_{\Gamma_a} \eta(x_2) \, dx_1 \, dx_2  \qquad a\in(0,1).
\end{equation}
Let $N\in C^\infty(0,1)$ be such that $N''(t) =\eta(t)$,
$N(0)=N'(0)=0$. To compute the second integral, we integrate first with respect to $x_2$
\begin{eqnarray}\nonumber
\int_{\Gamma_a} \eta(x_2) &=& \int_{-a}^1\int_0^{\omega(a)(x_1+a)} \eta(x_2)\, dx_2 \, dx_1
=\int_{-a}^1 N'(\omega(a)(x_1+a)) \, dx_1
\\
\label{eqn:equality-mass-boh}
&=&\frac{1}{\omega(a)}\int_{0}^{\omega(a)(1+a)} N'(x_1) \, dx_1
=\frac{ N(\omega(a)(1+a))}{\omega(a)}.
\end{eqnarray}
Similarly, we compute the left integral in \eqref{eqn:equality-mass-new2} by first integrating with respect to $x_1$ and then with respect to $x_2$. By noting that the function $Z(t) = -t^2 (t-1)^2$ satisfies $Z'' (t)= \zeta(t)$ and $Z(1)=Z'(1)=0$, it follows that
\begin{equation}\label{eqn:equality-mass-new}
- \int_{\Gamma_a} \zeta(x_1) \, dx_1 \, dx_2 
= -\int_0^{\omega(a)(a+1)} \int_{x_2/\omega(a)-a}^{1} \zeta(x_1) \, dx_1 \, dx_2 = \omega(a) a^2(a+1)^2.
\end{equation}
Thanks to \eqref{eqn:equality-mass-boh} and \eqref{eqn:equality-mass-new}, \eqref{eqn:equality-mass-new2} can be rewritten as
$$N(\omega(a)(1+a)) = \omega^2(a)a^2(a+1)^2, \qquad a\in (0,1);$$ i.e.,
\begin{equation}\label{defn:N}
N(t)= t^2 a^2(t), \qquad t\in (0,1),
\end{equation}
which is equivalent to \eqref{ts:eta}. Moreover, by \eqref{defn:omega} and the implicit function theorem, {the function $a: [0,1] \to [0,1]$ which solves \eqref{eqn:t-a} is monotone, continuous, and belongs to $C^\infty(0,1)$.}

{\it Proof of statement (ii)}.
Let $\omega(a) = a^s/2$ for $s \in (2,\infty)$. We are left to prove $\eta(0)=0$ and that $\eta$ is H\"older continuous at $0$. By the implicit function theorem, there exists a $C^\infty$ function $b$ defined in a neighborhood of $0$ so that
$$r= \frac{b(r)}{2^{1/s}}(1+b(r))^{1/s}$$
for every $r\geq 0$ sufficiently small. Therefore, $b(t^{1/s})$ is a solution to $$t= \frac{b^s(t^{1/s})}{2} (1+b(t^{1/s})),$$ and hence $a(t) = b( t^{1/s})$ for every $t$ sufficiently small.
Therefore the function $t^2 a^2(t) = t^2 b^2(t^{1/s})$ belongs to $C^{2,2/s}[0,1]$. Indeed, an explicit computation shows that away from the origin, the second derivative is a smooth function of $t^{1/s}$ through the formula:
\begin{equation}\label{eqn:2-deriv}
\frac{d^2}{dt^2} [t^2b(t^{1/s})]=
2b^2(t^{1/s}) + \Big(\frac{6}{s}+\frac{2}{s^2}\Big) t^{1/s} b( t^{1/s}) b'( t^{1/s})
+\frac{2}{s^2} t^{2/s} \big( [b'( t^{1/s})]^2 + b( t^{1/s}) b''( t^{1/s}) \big).
\end{equation}
By computing the third derivative and integrating it between any two points, it follows that each term in this expression is $C^{2/s}$. We show this regularity for the first term in \eqref{eqn:2-deriv} (the argument is similar for the other terms). Since $b$ is Lipschitz and $b(0)=0$, we have that $b(r) \leq r\|b'\|_{L^\infty((0,1))}$ for every $r\in[0,1]$; hence for every $t_1,t_2\in [0,1]$ with $t_1<t_2$
\begin{equation*}
\begin{split}
b^2(t_1^{1/s}) - b^2(t_2^{1/s}) 
&= 
\int_{t_1}^{t_2} \frac{d}{dt}[b^2(t^{1/s})] \, dt
\leq\frac{2}{s} \int_{t_1}^{t_2} b(t^{1/s}) b'(t^{1/s})  t^{1/s-1}\, dt
\\
&\leq
\frac{\ 2|b'\|^2_{\infty}}{s} \int_{t_1}^{t_2} t^{2/s-1}\, dt 
= \|b'\|^2_{\infty} \big({t_2}^{2/s}- {t_1}^{2/s}\big) 
\leq \|b'\|^2_{\infty} \big({t_2}- {t_1}\big)^{2/s} .
\end{split}
\end{equation*}
Since $b(0)=0$, \eqref{eqn:2-deriv} implies $\eta(0)=0$.  
Moreover, if $s=1/n$ for some $n\in \N$, then $a(t) = b(t^n)$ is $C^\infty$ up to $t=0$.
Similarly, if $s=1/(2n)$, then $t^2 a^2(t) = t^2 b^2(t^{2n}) \in C^\infty[0,1]$ with an even extension for $t\in [-1,1]$ (since the function $t^2 a^2(t)$ depends smoothly on $t^2$). 

{\it Proof of statement (iii)}.
We apply the same line of reasoning as before by letting $\omega(a) = e^{1-1/a}/2$: consider the equation $$t=\frac{e^{1-1/a}}{2}(1+a),$$ which is equivalent to
\begin{equation*} \label{dre}
r = \frac{b(r)}{1 - b(r) \left(\log (1+b(r)) +1-\log{2}\right)},
\end{equation*}
where $r=\frac{-1}{\log(t)}$. By the implicit function theorem, there exists a smooth solution $b=b(r)$ in a neighborhood of the origin. Thus, $a(t)=b(r(t))= b(-1 /\log (t))$, and this implies that the function $t^2 a^2(t) = t^2 b^2(-1 /\log (t))$ belongs to $C^{2}[0,1]$; since $b(0)=0$, we deduce $\eta(0)=0$.  
\end{proof}

\begin{remark}\label{rmk:even-other-alpha}
In Lemma~\ref{lemma:reg-eta} (ii),  one could also take $s\in (0,2]$ in order to obtain a more regular $\eta$. More precisely, if $\omega(a) = a^s/2$ with $s\in (0,\infty)$, then $\eta(|\cdot|) \in C^{k,\beta}([-1,1])$, where $k$ is the integer part of $\frac{2}{s}$ and $\beta=\frac{2}{s}-k$. 
Indeed, the regularity of $\eta$ is determined by the regularity of $N$ as in \eqref{defn:N}; thanks to Lemma~\ref{lemma:reg-eta} (i), it is enough to check the regularity of $\eta$ at the origin. This can be made explicit by noting that $a(t)= O(t^{1/s})$ close to $t=0$, see \eqref{eqn:t-a}. It follows that $N(t) = O(t^{2+2/s})$. To make this argument rigorous, one can rewrite $N(t)=t^2 b^2(t^{1/s})$ with $b$ smooth up to the origin and explicitly compute the derivatives of this expression noting that many vanish at the origin. We also remark that in Lemma~\ref{lemma:reg-eta} (iii), a modulus of continuity of $\eta$ is given by the function {$-1/\log(\cdot)$} (up to a constant).
\end{remark}

\begin{remark}
Slight generalizations of the construction in this paper do not lead to better counterexamples. For instance, the particular choice of $\zeta$ is made in such a way that $\zeta(0) \neq 0$, a fact which is needed in the proof of Theorem \ref{thm:main}. This implies that the function $Z\in C^\infty(0,1)$ satisfying $Z'' (t)= \zeta(t)$ and $Z(1)=Z'(1)=0$, is at most quadratic at the origin.
On the other hand, this limits the regularity of $\eta$ in Lemma~\ref{lemma:reg-eta}.
Similarly, altering $f$ to have the same structure as $g$, namely
$$ f(x)= 1_\Delta(x) \big(1 + c( \tilde \zeta(x_1) + \tilde \eta(x_2) )\big), \qquad x = (x_1,x_2)\in \R^2,$$
does not lead to better counterexamples, as can be seen from the proof of Lemma~\ref{lemma:reg-eta}.
\end{remark}

\begin{lemma} \label{potent}
{Let $\Delta$ be as  in \eqref{defn:fg}, $\omega$ as in \eqref{defn:omega}, and $\{l_a\}_{a\in(0,1)}$ the family of lines defined in \eqref{defn:la}.}
Then there exists $u \in C^{0,1}(\overline{\Delta})\cap C^2(\Delta)$ such that
$$|u(x)-u(y)| \leq |x-y|, \qquad x,y\in\Delta,$$
with equality if and only if $x,y \in l_a$ for some $a\in (0,1)$.
\end{lemma}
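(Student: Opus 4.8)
The plan is to construct $u$ \emph{explicitly} by prescribing its gradient: I want $\nabla u$ to be, at every interior point, the unit vector pointing along the transport ray through that point. Since the strict monotonicity of $\omega$ makes the segments $l_a$ pairwise disjoint, they foliate $\Delta$: each $x=(x_1,x_2)\in\Delta$ lies on a unique $l_{a(x)}$, and by the implicit function theorem applied to $G(a,x):=\omega(a)(x_1+a)-x_2$ — whose $a$-derivative $\omega'(a)(x_1+a)+\omega(a)$ is strictly positive on the ray, since $x_1+a>0$ there — the function $a(\cdot)$ belongs to $C^\infty(\Delta)$. First I would record this foliation and smoothness; establishing that the rays actually \emph{cover} all of $\Delta$ (not merely that they are disjoint) is one point requiring care, which I would settle from the geometry: as $a$ runs over $(0,1)$ the base point $(-a,0)$ sweeps the bottom edge while the endpoint $(1,\omega(a)(1+a))$ sweeps the right edge, so the chords $l_a$ exhaust the triangle.

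Next I would set
$$u(x)=h(a(x)) - (x_1 + a(x))\sqrt{1+\omega(a(x))^2},$$
which is manifestly affine with the correct unit slope \emph{along} each ray (if $x,y\in l_a$ then $|u(x)-u(y)| = |x-y|$ is immediate, since $x_1 + a$ measures Euclidean distance from the base point $(-a,0)$ up to the factor $\sqrt{1+\omega(a)^2}$). The free function $h$ is then pinned down by demanding $|\nabla u|\equiv 1$ with $\nabla u$ parallel to the ray direction $(1,\omega(a))$. The cleanest way to compute is to pass to the coordinates $(a,t)$ with $x_1 = t-a$, $x_2=\omega(a)t$ — a diffeomorphism onto $\Delta$, since the Jacobian $-\omega(a)-\omega'(a)t$ never vanishes — in which $u = h(a) - t\sqrt{1+\omega(a)^2}$. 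Imposing $\nabla_x u = -(1,\omega(a))/\sqrt{1+\omega(a)^2}$ and matching the $a$- and $t$-derivatives, the $t$-dependent terms cancel and I am left with the single condition $h'(a)=1/\sqrt{1+\omega(a)^2}$; so I take $h(a)=\int_0^a (1+\omega(s)^2)^{-1/2}\,ds$. This explicit gradient computation is the technical heart of the proof.

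With $h$ so chosen, $u$ inherits $C^\infty(\Delta)$ regularity from $a(\cdot)$ and $\omega\in C^\infty(0,1)$, hence in particular $u\in C^2(\Delta)$, and by construction $\nabla u = -(1,\omega(a))/\sqrt{1+\omega(a)^2}$ has unit length throughout $\Delta$. Since $\Delta$ is convex, integrating $\nabla u$ along the segment joining any $x,y\in\Delta$ and applying the Cauchy--Schwarz inequality gives $|u(x)-u(y)|\le |x-y|$; as a bounded $1$-Lipschitz function on $\Delta$ it extends uniquely to an element of $C^{0,1}(\overline\Delta)$.

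Finally, for the equality statement I would analyze the Cauchy--Schwarz step. Equality $|u(x)-u(y)|=|x-y|$ forces $\nabla u$ to be a constant unit multiple of $(y-x)$ all along the connecting segment; but $\nabla u$ is the unit vector $-(1,\omega(a(\cdot)))/\sqrt{1+\omega(a(\cdot))^2}$, and the map $a\mapsto (1,\omega(a))$ is injective because $\omega$ is \emph{strictly} increasing, so $a(\cdot)$ is constant along the segment, i.e. the whole segment — and in particular $x$ and $y$ — lies on a single ray $l_a$. The main obstacle is thus twofold: verifying the global foliation of $\Delta$ by the $l_a$, and carrying out the gradient computation that determines $h$ and yields $|\nabla u|\equiv 1$. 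It is exactly the strict monotonicity of $\omega$ that upgrades the Lipschitz inequality to the sharp ``equality only on rays'' characterization.
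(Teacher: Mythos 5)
Your proof is correct, but it takes a genuinely different route from the paper's. The paper also reduces the lemma to producing a function $u$ whose gradient is the unit ray field $b(x) = (1+\omega^2(a(x)))^{-1/2}\,({\bf e}_1 + \omega(a(x))\,{\bf e}_2)$, but instead of writing $u$ down it proves existence abstractly: since $\Delta$ is simply connected, it suffices to check that $b$ is irrotational, i.e. $\partial_1 b_2 - \partial_2 b_1 = 0$, which after implicitly differentiating $x_2 = \omega(a)(x_1+a)$ in $x_1$ and in $x_2$ reduces to the identity $\partial_2 a + \partial_1 a/\omega(a) = 0$; the Poincar\'e lemma then yields $u$, which is finally extended by Lipschitz continuity to $\overline\Delta$. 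You instead guess the potential explicitly, $u = h(a) - (x_1+a)\sqrt{1+\omega^2(a)}$ with $h(a) = \int_0^a (1+\omega(s)^2)^{-1/2}\,ds$, and verify by the same implicit differentiation (carried out in your $(a,t)$ coordinates) that $\nabla u$ equals $-b$; the sign discrepancy with the paper is immaterial, as $-u$ serves equally well. The underlying computation is essentially the same identity in both cases --- your cancellation of the $t$-dependent terms when matching $\partial_a u$ is precisely the curl-freeness the paper checks --- but your version buys an explicit formula for the Kantorovich potential and, as a byproduct, addresses the covering/foliation property of the rays (via the intermediate-value/endpoint-sweeping argument), a point the paper takes for granted; the cost is the extra ansatz and change of coordinates, whereas the paper's argument is shorter once one accepts a pure existence statement. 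Your treatment of the Lipschitz bound and of the equality case (Cauchy--Schwarz along segments in the convex set $\Delta$, plus injectivity of $a \mapsto \omega(a)$ forcing $a(\cdot)$ to be constant on the segment) fills in details that the paper compresses into the remark that $|Du|\leq 1$ with equality only when $Du$ points along the ray direction.
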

\begin{proof}
For every $x \in \Delta$, let $a(x)$ be such that the unique line passing through $x$ is $l_{a(x)}$. Note that $a(x)$ is obtained by solving 
\begin{equation} \label{eqn:a-solves}
x_2= \omega(a)(x_1+a);
\end{equation}
the assumptions on $\omega$ imply $a \in C^1(\Delta)$ (via the implicit function theorem). Next, consider the unit vector field $b \in C^1(\Delta; \R^2)$ which points in the direction of $l_{a(x)}$ for each $x \in \Delta$, i.e. $$b(x) = \frac{1}{(1+\omega^2(a(x)))^{1/2}} ({\bf e}_1+ \omega(a(x)){\bf e}_2)  \qquad \forall x\in \Delta. $$
The statement of the lemma is equivalent to the existence of a function $u\in C^1(\Delta)$ such that 
\begin{equation} \label{eqn:b-grad}
Du(x) = b(x) \qquad \forall x\in \Delta.
\end{equation}
Indeed, if \eqref{eqn:b-grad} holds, then $||Du||_{L^\infty((0,1))} \leq 1$ and $\partial_t[u(-a{\bf e}_1+t b(x))]=|b(x)|^2=1$ for every $x \in \Delta$ and $t>0$; since $-a{\bf e}_1+t b(x)$ is a parametrization of the line $l_{a(x)}$ with $0<t<(a+1)(1+\omega^2(a))^{1/2}$, this shows that $u$ is linear with slope $1$ when restricted to $l_{a(x)}$.
On the other hand if the statement of the lemma holds, then $|Du|\leq 1$ and to satisfy the equality cases, the gradient of $u$ must point in the direction of $l_{a(x)}$ for every $x\in \Delta$. As $\Delta$ is simply connected, \eqref{eqn:b-grad} is equivalent to showing that $b$ is an irrotational vector field, i.e. 
$$0 = \partial_1 b_2(x) - \partial_2 b_1(x) \qquad \forall x\in \Delta,$$ or equivalently
\begin{equation}\label{eqn:to-be-a-gradient}
0 = \frac{\omega(a(x))\omega'(a(x))}{(1+\omega(a(x)))^{3/2}} \Big(\partial_2 a(x) + \frac{\partial_1a(x)}{\omega(a(x))}\Big)  \qquad \forall x\in \Delta.
\end{equation}
By applying $\partial_1$ and $\partial_2$ to $\eqref{eqn:a-solves}$, it follows that
\begin{equation}\label{eqn:a-diff1}
 0= \frac{\omega'(a(x))x_2}{\omega(a(x))}  \partial_1a(x) + \omega(a(x)) (\partial_1a(x)+1) \qquad \forall x\in \Delta,
\end{equation}
\begin{equation}\label{eqn:a-diff2}
 1 = \frac{\omega'(a(x))x_2}{\omega(a(x))}  \partial_2a(x) + \omega(a(x))\partial_2a(x) \qquad \forall x\in \Delta.
\end{equation}
Multiplying \eqref{eqn:a-diff2} by $\omega(a(x))$ and adding \eqref{eqn:a-diff1} yields 
$$0 = \partial_2 a(x) + \frac{\partial_1a(x)}{\omega(a(x))}  \qquad \forall x\in \Delta,$$
which proves \eqref{eqn:to-be-a-gradient}. To conclude, we note that $u$ admits a Lipschitz extension to $\overline{\Delta}$.  
\end{proof}

%
%
%
%
%

\begin{proof}[Proof of Theorem~\ref{thm:main}]
By the construction in \cite{CFM, TW} and \cite{FM}, there exists a unique measure preserving map $T:\Delta \to \Delta$ between $f$ and $g$ such that $x$ and $T(x)$ are contained on a common line $l_a$ for all $x \in \Delta$. Moreover, by Lemma \ref{potent} (which exploits the construction of the Kantorovich potential), one readily obtains that $T$ is an optimal mapping in the Monge problem, see e.g. \cite[Lemma 7]{LSW}. Since $T$ is measure preserving, the construction in \cite{CFM, TW} implies for every $a\in (0,1)$
\begin{equation}\label{eqn:mist}
  \lim_{\delta\to 0} \frac{1}{\delta} \int_{(\Gamma_{a+\delta}\setminus\Gamma_a )\cap \{x_1<0\}} f(x) \, dx = 
    \lim_{\delta\to 0} \frac{1}{\delta} \int_{(\Gamma_{a+\delta}\setminus\Gamma_a )\cap \{x_1<T_1(a \omega(a){\bf e}_2)\}} g(x) \, dx.
 \end{equation}
Note that $$\frac{d}{d\delta} \Big|_{\delta=0}\Big[\omega(a+\delta)(x_1+a+\delta)\Big]=\omega'(a)\Big(x_1+a+\frac{\omega(a)}{\omega'(a)}\Big);$$ thus, \eqref{eqn:mist} may be rewritten as
\begin{equation} \label{finito}
 \int_{-a}^0 \Big(x_1+a +\frac{\omega(a)}{\omega'(a)}\Big) \, dx_1 =
 \int_{-a}^{T_1(a \omega(a){\bf e}_2)} \Big(x_1+a +\frac{\omega(a)}{\omega'(a)}\Big) \Big(1+c\big(\zeta(x_1)+ \eta(\omega(a) (x_1+a))\big)\Big) \, dx_1,
\end{equation}
(after canceling $\omega'(a)$ from both sides).
Next, set
$$
z(a):= T_1(a\omega(a) {\bf e}_2), \qquad a\in (0,1).$$ To prove the second inequality in \eqref{ts:thm}, 
note that by \eqref{defn:c}, the second term of the integrand on the right-hand side of \eqref{finito} is greater than $1/2$ for every $x_1\in (-1,1)$; therefore
$$ \frac 1 2
 \int_{-a}^{z(a)} \Big(x_1+a +\frac{\omega(a)}{\omega'(a)}\Big) \, dx_1
\leq
\int_{-a}^0 \Big(x_1+a +\frac{\omega(a)}{\omega'(a)}\Big) \, dx_1.
 $$
By evaluating the two integrals we obtain 
 $$
\frac 1 2 \Big( \frac{(z(a)+a)^2}{2} +\frac{\omega(a)(z(a)+a)}{\omega'(a)} \Big)
\leq
\frac{a^2}{2} +\frac{\omega(a)a}{\omega'(a)}
\leq
\frac 1 2 \Big( \frac{(2a)^2}{2} +\frac{\omega(a) 2a}{\omega'(a)}
\Big).
 $$
Since the function $t\to t^2/2+t\omega(a)/\omega'(a)$ is increasing on $[0,\infty)$, it follows that
 $${z(a)+a} \leq 2a,
 $$
which proves that the $\limsup$ in \eqref{ts:thm} is bounded by $1$. Next, we prove the first inequality in \eqref{ts:thm}.
By the properties of $\zeta$ and $\eta$ it follows that there exists a constant $0<c_0<1$ such that $\zeta \leq -2c_0$ and $\eta \leq c_0$ in a neighborhood of the origin.
Hence, since $z(a) \to 0$ as $a \to 0$, for $a$ sufficiently small,
$$ \int_{-a}^0 \Big(x_1+a +\frac{\omega(a)}{\omega'(a)}\Big) \, dx_1 \leq
(1-c_0)
 \int_{-a}^{z(a)} \Big(x_1+a +\frac{\omega(a)}{\omega'(a)}\Big) \, dx_1.
 $$
Evaluating the two integrals yields
 $$
\frac{a^2}{2} +\frac{\omega(a)a}{\omega'(a)}\leq(1-c_0) \Big( \frac{(z+a)^2}{2} +\frac{\omega(a)(z(a)+a)}{\omega'(a)} \Big).
 $$
 On the other hand, note that 
 $$
 (1-c_0) \Big( \frac{a^2}{2(1-c_0)} +\frac{\omega(a)a}{\omega'(a) (1-c_0)^{1/2}} \Big)
 \leq \frac{a^2}{2} +\frac{\omega(a)a}{\omega'(a)}.$$
Since the function $t\to t^2/2+t\omega(a)/\omega'(a)$ is increasing on $[0,\infty)$,
 we deduce
 $$\frac{a}{(1-c_0)^{1/2}} \leq z(a)+a.
 $$
Subtracting $a$ from both sides yields that the $\liminf$ in \eqref{ts:thm} is bounded below by $$\frac 1 {(1-c_0)^{1/2}}-1>0.$$
\end{proof}

\emph{Acknowledgments.}
We thank Filippo Santambrogio for his remarks on a preliminary version of the paper and for pointing out the example leading to \eqref{ex1}. This work was completed at the Mathematical Sciences Research Institute in Berkeley, California, while the first author was a Program Associate and the second author a Huneke Postdoctoral Scholar. The warm hospitality of the institute is kindly acknowledged.

\def\cprime{$'$} \def\cprime{$'$}

\signei

\signdm


\begin{thebibliography}{10}
{\small


%
%
%
%
%
%
%
%
%
%
%

%
%
\bibitem {Am} Ambrosio, L.: 
              Lecture notes on optimal transport problems, 
              Mathematical Aspects of Evolving Interfaces, 
              Lecture Notes in Math., 1812 (2003), 1-52.
              
%
%
%

\bibitem {Ca91} Caffarelli, L.: Some regularity properties of solutions of Monge Amp\`ere equation, Comm. Pure Appl. Math. 44 (1991), 965-969. 


\bibitem {Ca92} Caffarelli, L.:
              The regularity of mappings with a convex potential,
              J. Amer. Math. Soc., {5} (1992), 99-104.



\bibitem {Ca96a} Caffarelli, L.:
             Boundary regularity of maps with convex potentials II,
             Ann. of Math. {144} (1996),  453-496.

%
\bibitem{CFM} Caffarelli, L., Feldman, M., McCann, R.J.: 
             Constructing optimal maps for Monge's transport problem as a limit of 
             strictly convex costs, J. Amer. Math. Soc., 15 (2002), 1-26.

%
%
%
%
%
%
%
%
%
%
\bibitem{EG} Evans, L.C., Gangbo, W.: 
              Differential equations methods for the Monge-Kantorovich mass transfer 
              problem, Mem. Amer. Math. Soc., 653 (1999).
%
\bibitem{FM} Feldman, M., McCann, R.J.: 
               Uniqueness and transport density in Monge's mass transportation problem, 
               Cal. Var. PDE, 15 (2002), 81-113.

\bibitem{Fi} Figalli, A., Kim, Y.-H., McCann, R.J.: H\"older continuity and injectivity of optimal maps, Arch. Ration. Mech. Anal.,209 (2013), no.3, 1812-1824.                 
               
%
%
\bibitem{FGP} Fragal\`{a}, I., Gelli, M.S., Pratelli, A.: 
             Continuity of an optimal transport in Monge problem, 
             J. Math. Pures Appl., 84 (2005), 1261-1294.
             
%
%
%

\bibitem{LSW} Li, Q.-R., Santambrogio, F., Wang, X.-J.:
 Regularity in Monge's mass transfer problem, Preprint 2013.

 \bibitem{Li} Liu, J.: H\"{o}lder regularity of optimal mappings in optimal transportation, {Calc. Var. PDE}, 34 (2009), 435-451.


%
%
%
%
\bibitem{T1} Ma, X.-N., Trudinger, N. S., and Wang, X.-J.: Regularity of potential functions of the optimal transportation problem, {Arch. Ration. Mech. Anal.} 177 (2005), 151-183. 



\bibitem{GM} Monge, G.: M\'{e}moire sur la th\'{e}orie des d\'{e}blais et des remblais, Histoire de l'Acad\'{e}mie Royale des Sciences de Paris (1781), 666-704. 
%
%
%
%
\bibitem{Sud} Sudakov, V. N.: Geometric problems in the theory of infinite
dimensional probability distributions, Proc. Steklov Inst., 141 (1979), 1-178.
%
%
\bibitem{TW} Trudinger, N.S., Wang, X.J.: On the Monge mass transfer
problem, Calc. Var. PDE, 13 (2001), 19-31.
%

}\end{thebibliography}
\end{document}